\theoremstyle{plain}
\newtheorem*{theorem*}{Theorem}
\newtheorem{theorem}{Theorem}[section]
\newtheorem{lemma}[theorem]{Lemma}
\newtheorem{proposition}[theorem]{Proposition}
\newtheorem*{claim*}{Claim}
\newtheorem{conjecture}[theorem]{Conjecture}
\theoremstyle{remark}
\DeclareMathOperator{\Aut}{Aut}
\newcommand{\I}{\mathcal{I}}
\newcommand{\A}{\mathcal{A}}
\newcommand{\B}{\mathcal{B}}
\newcommand{\PN}{\mathcal{P}_n}
\newcommand{\N}{\mathbb{N}}
\let\emptyset\varnothing
\let\eps\varepsilon
\let\originalleft\left
\let\originalright\right
\renewcommand{\left}{\mathopen{}\mathclose\bgroup\originalleft}
\renewcommand{\right}{\aftergroup\egroup\originalright}
\begin{document}

\title{On symmetric 3-wise intersecting families}

\author{David Ellis}
\address{School of Mathematical Sciences, Queen Mary, University of London, London E1\thinspace4NS, UK.}
\email{d.ellis@qmul.ac.uk}

\author{Bhargav Narayanan}
\address{Department of Pure Mathematics and Mathematical Statistics, University of Cambridge, Wilberforce Road, Cambridge CB3\thinspace0WB, UK.}
\email{b.p.narayanan@dpmms.cam.ac.uk}

\date{2 September 2016}

\subjclass[2010]{Primary 05D05; Secondary 05E18}

\begin{abstract}
A family of sets is said to be \emph{symmetric} if its automorphism group is transitive, and \emph{$3$-wise intersecting} if any three sets in the family have nonempty intersection. Frankl conjectured in 1981 that if $\A$ is a symmetric $3$-wise intersecting family of subsets of $\{1,2,\dots,n\}$, then $|\A| = o(2^n)$. Here, we give a short proof of Frankl's conjecture using a `sharp threshold' result of Friedgut and Kalai.
\end{abstract}
\maketitle

\section{Introduction}
A family of sets is said to be \emph{intersecting} if any two sets in the family have nonempty intersection. One of the best-known theorems in extremal combinatorics is the Erd\H{o}s--Ko--Rado (EKR)
theorem~\citep{EKR}, which bounds the size of an intersecting family of sets of a fixed size.
\begin{theorem}
\label{thm:ekr}
Let $k,n \in \mathbb{N}$ with $k < n/2$. If $\A$ is an intersecting family of $k$-element subsets of $\{1,2,\dots,n\}$, then $|\A| \leq \binom{n-1}{k-1}$, with equality holding if and only if $\A$ consists of all the $k$-sets that contain some fixed element $i \in \{1,2,\dots,n\}$
\end{theorem}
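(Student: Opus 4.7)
The plan is to apply Katona's cyclic permutation argument, which gives a short double-counting proof of Theorem~\ref{thm:ekr}. First I fix a cyclic ordering $\sigma$ of $\{1,2,\dots,n\}$ and call a $k$-subset of $\{1,\dots,n\}$ an \emph{arc} of $\sigma$ if its elements occupy $k$ consecutive positions on the cycle. The central claim is that, because $k < n/2$, any intersecting collection of arcs of $\sigma$ contains at most $k$ arcs. To prove this I pick any arc $A$ in the collection and observe that the only other arcs of $\sigma$ that meet $A$ are the $2(k-1)$ arcs obtained by shifting $A$ clockwise or counter-clockwise by $1, 2, \dots, k-1$ positions. These $2(k-1)$ arcs naturally split into $k-1$ pairs, each consisting of a left-shift by $i$ and a right-shift by $k-i$; since $n > 2k$ the two arcs in each such pair are disjoint, so at most one of them can lie in an intersecting family. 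Together with $A$ itself, this yields the bound of $k$.

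Next I would double-count pairs $(\sigma, A)$ in which $\sigma$ is a cyclic ordering of $\{1,\dots,n\}$ and $A \in \A$ is an arc of $\sigma$. There are $(n-1)!$ cyclic orderings and each contributes at most $k$ pairs, so the count is at most $k(n-1)!$. On the other hand, by symmetry, any fixed $k$-set is an arc in exactly $k!(n-k)!$ cyclic orderings (choose the rotation of $A$ within its arc and permute the remaining $n-k$ elements), so the count equals $|\A| \cdot k!(n-k)!$. Combining the two estimates,
\[
|\A| \leq \frac{k\,(n-1)!}{k!(n-k)!} = \binom{n-1}{k-1},
\]
which is the desired bound.

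The main obstacle is establishing the $k$-arc bound in the first step; the rest is routine enumeration. For the uniqueness in the equality case, I would examine when the double count is tight: saturation forces every cyclic ordering to contain \emph{exactly} $k$ arcs from $\A$, and a short case analysis (tracking which arcs are chosen as $\sigma$ varies) then pins $\A$ down as the \emph{star} $\{A \in \binom{[n]}{k} : i \in A\}$ for some fixed element $i \in \{1,\dots,n\}$.
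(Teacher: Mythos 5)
The paper itself does not prove Theorem~\ref{thm:ekr}; it is quoted from the original Erd\H{o}s--Ko--Rado paper and used only as motivating background, so there is no ``paper proof'' to compare against. What you have written is Katona's cyclic permutation proof, which is indeed a correct and standard route to the theorem. Your treatment of the inequality is essentially complete: the claim that an intersecting family of arcs on a cycle of length $n > 2k$ has size at most $k$ is correct (the pairing of the left-shift by $i$ with the right-shift by $k-i$ produces $k-1$ disjoint pairs precisely because the two arcs in a pair together span only $2k \le n$ consecutive positions), and the double count $|\A| \cdot k!(n-k)! \le k\,(n-1)!$ rearranges to $|\A| \le \binom{n-1}{k-1}$ as you say.

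The one place where your argument is genuinely thinner than it should be is the equality analysis. Saying that tightness ``forces every cyclic ordering to contain exactly $k$ arcs from $\A$'' and that ``a short case analysis then pins $\A$ down as the star'' is a sketch, not a proof, and there is real content hiding there. What one actually has to show is, first, that within a fixed cyclic order the $k$ arcs of $\A$ must form a consecutive run $B_a, B_{a+1}, \dots, B_{a+k-1}$ (this uses that the indices of the chosen arcs are pairwise within distance $k-1$, hence fill an interval of $k$ consecutive integers), so they share a unique common point; and second, a connectivity argument across cyclic orders --- typically by comparing two orders that differ by a transposition of adjacent elements --- to show that this common point is the same element $i$ for every cyclic order, whence $\A$ is the star at $i$. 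Neither step is long, but neither is a one-liner, and the second in particular deserves to be written out. If you flesh out those two steps, you have a complete and classical proof that is entirely in the spirit of Katona's argument.
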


Over the last fifty years, many results have been obtained which bound the sizes of families of sets, under various intersection requirements on the sets in the family. Such results are often called \emph{EKR-type results}. 

Often in EKR-type results, the extremal families are highly asymmetric; this is the case in the Erd\H{o}s--Ko--Rado theorem itself, and in the Ahlswede--Khachatrian theorem \cite{AK}, for example. It is therefore natural to ask what happens to the maximum possible size of an intersecting family when one imposes a `symmetry' requirement on the family.

To make the idea of a `symmetric' family precise, we need a few definitions. For a positive integer $n \in \N$, we denote the set $\{1, 2,\dots, n\}$ by $[n]$. We write $S_n$ for the symmetric group on $[n]$ and $\PN$ for the power-set of $[n]$. For a permutation $\sigma \in S_n$ and a set $x \subset [n]$, we write $\sigma(x)$ for the image of $x$ under $\sigma$, and if $\A \subset \PN$, we write $\sigma(\A) = \{\sigma(x):x \in \A\}$. We define the \emph{automorphism group} of a family $\A \subset \PN$ by
\[\Aut(\A) = \{\sigma \in S_n:\sigma(\A) = \A\}.\]
We say that $\A \subset \PN$ is \emph{symmetric} if $\Aut(\A)$ is a transitive subgroup of $S_n$, i.e., if for all $i, j \in [n]$, there exists a permutation $\sigma \in \Aut(\A)$ such that $\sigma(i) = j$.

For an integer $r \geq 2$, a family of sets $\A$ is said to be \emph{$r$-wise intersecting} if any $r$ of the sets in $\A$ have nonempty intersection, i.e., if $x_1 \cap x_2 \cap \dots \cap x_r \neq \emptyset$ for all $x_1, x_2, \dots, x_r \in \A$. Clearly, an $r$-wise intersecting family is also $t$-wise intersecting for all $2 \le t \le r$. Since an $r$-wise intersecting family $\A \subset \PN$ is also intersecting, it cannot contain both a set and its complement, so we clearly have $|\A| \leq 2^{n-1}$. This is best-possible, since the family $\A = \{x \subset [n]: 1 \in x\}$ is $r$-wise intersecting for any $r \geq 2$. However, this family is very far from being symmetric. It is therefore natural to ask, for each $r \ge 2$, how large a symmetric $r$-wise intersecting family of subsets of $[n]$ can be. When $r=2$ and $n$ is odd, the family $\{x \subset [n]: |x| > n/2\}$ is a symmetric intersecting family of (the maximum possible) size $2^{n-1}$. However, an old conjecture of Frankl~\cite{frankl-1} asserts that symmetric $r$-wise intersecting families must be much smaller when $r\ge 3$. More precisely, Frankl conjectured that if $\A \subset \PN$ is a symmetric $3$-wise intersecting family, then $|\A| = o(2^n)$. Our purpose in this paper is to give a short proof of Frankl's conjecture; in fact, we prove the following.

\begin{theorem}
\label{thm:main}
There exists a universal constant $c>0$ such that the following holds for all $n \in \N$. If $\A \subset \PN$ is a symmetric $3$-wise intersecting family, then $|\A| \le 2^n / n^c$.
\end{theorem}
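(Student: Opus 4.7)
The approach combines a short colouring computation bounding $\mu_{2/3}(\A)$ with the Friedgut--Kalai sharp threshold theorem, where $\mu_p$ denotes the product measure on $\PN$ in which each coordinate is included independently with probability $p$. First, I may assume that $\A$ is monotone increasing, since replacing $\A$ by its up-closure preserves both the 3-wise intersecting property and the transitivity of $\Aut(\A)$ while only making $|\A|$ larger.

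The key lemma is that $\mu_{2/3}(\A) \le 2/3$. To see this, sample a uniformly random 3-colouring $c \colon [n] \to \{1,2,3\}$ and, for each $j \in \{1,2,3\}$, let $X_j = \{i \in [n] : c(i) \ne j\}$. Each $X_j$ is distributed as $\mu_{2/3}$, but by construction $X_1 \cap X_2 \cap X_3 = \emptyset$. Since $\A$ is 3-wise intersecting, the events $\{X_j \in \A\}$ cannot all occur simultaneously, so a union bound gives
\[
0 \;=\; \Pr[X_1, X_2, X_3 \in \A] \;\ge\; 3\mu_{2/3}(\A) - 2,
\]
and so $\mu_{2/3}(\A) \le 2/3$.

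To finish, I invoke the Friedgut--Kalai theorem, which provides an absolute constant $K > 0$ such that, for any monotone $\F \subseteq \PN$ whose automorphism group is transitive, $\mu_p(\F) > \eps$ implies $\mu_q(\F) > 1 - \eps$ for every $q \ge p + K \log(1/\eps)/\log n$. Supposing, for contradiction, that $\mu_{1/2}(\A) > n^{-c}$, the theorem (applied with $p = 1/2$ and $\eps = n^{-c}$) gives $\mu_{1/2 + Kc}(\A) > 1 - n^{-c}$. Choosing $c$ small enough that $Kc \le 1/6$ ensures $1/2 + Kc \le 2/3$, and so $\mu_{2/3}(\A) > 1 - n^{-c}$; this exceeds $2/3$ as soon as $n^c > 3$, contradicting the colouring bound. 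The finitely many small values of $n$ not covered by this argument are dealt with by further shrinking $c$ and appealing to the trivial intersecting bound $|\A| \le 2^{n-1}$. The main obstacle, I expect, is spotting that $p = 2/3$ is the correct pivot — close enough to $1/2$ for the Friedgut--Kalai transition window to close, yet far enough for the 3-wise intersecting condition to yield a clean nontrivial bound via random colourings; once this is identified, the two halves of the argument dovetail.
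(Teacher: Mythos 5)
Your proof is correct, and the key lemma is obtained by a genuinely different route from the paper's. Where you use a random $3$-colouring to show directly that $\mu_{2/3}(\A)\le 2/3$ for any $3$-wise intersecting $\A$, the paper instead considers the family of pairwise intersections $\I(\A)=\{x\cap y : x,y\in\A\}$, shows $\mu_{1/4}(\I(\A))\ge\mu_{1/2}(\A)^2$ by a counting argument, and then uses the fact that $\A$ and $\I(\A)$ are cross-intersecting to deduce $\mu_{3/4}(\A)\le 1-\mu_{1/2}(\A)^2$. Both lemmas serve the same purpose: they provide an absolute obstruction at a fixed $p>1/2$ that Friedgut--Kalai is then shown to contradict unless $\mu_{1/2}(\A)$ is polynomially small. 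Your colouring argument is arguably cleaner and more self-contained (no auxiliary family, no cross-intersecting lemma, just a Bonferroni bound), and it generalises transparently to give $\mu_{(r-1)/r}(\A)\le (r-1)/r$ for $r$-wise intersecting families. The paper's lemma is quantitatively sharper in that the ceiling $1-\delta^2$ depends on $\delta=\mu_{1/2}(\A)$, which streamlines the final calculation slightly (one can read off $\delta<n^{-1/(8c_0)}$ directly rather than arguing by contradiction and then patching up small $n$), but this does not change the strength of the final theorem. One small bookkeeping remark: when you say the finitely many small $n$ are handled by ``further shrinking $c$'', note that the cutoff $n_0$ above which the contradiction kicks in itself depends on $c$, so one should fix the Friedgut--Kalai exponent $c_1$ first, set $n_0=3^{1/c_1}$, choose $c_2$ with $n_0^{c_2}\le 2$, and then take $c=\min(c_1,c_2)$; with that clarified, the argument is complete.
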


Our proof relies on certain properties of the $p$-biased measure on $\PN$ as well as a result of Friedgut and Kalai~\cite{fk} on the thresholds of symmetric increasing families. We describe these tools and then give the proof of Theorem~\ref{thm:main} in Section~\ref{sec:proof}.

An obvious example of a symmetric $r$-wise intersecting subfamily of $\PN$ is the family $\{x \subset [n]: |x| > (r-1)n/r\}$, the size of which is an exponentially small fraction of $2^n$ for any $r \geq 3$. However, for each $r \geq 3$, it is possible to construct much larger examples. We give such a construction, and state some open problems, in Section~\ref{sec:conc}.

\section{Proof of the main result}\label{sec:proof}
Before proving Theorem~\ref{thm:main}, we briefly describe the notions and tools we will need for the proof.

For $0 \le p \le 1$, we write $\mu_p$ for the \emph{$p$-biased measure} on $\PN$, defined by
\[\mu_p(\{x\}) = p^{|x|}(1-p)^{n-|x|}\]
for all $x \subset [n]$. Note that $\mu_{1/2}$ is just the uniform measure, since $\mu_{1/2} (\A) = |\A|/2^n$ for any $\A \subset \PN$.

We say that two families $\A,\B \subset \PN$ are \emph{cross-intersecting} if $x \cap y \neq \emptyset$ for all $x\in \A$ and $y \in \B$. We need the following generalisation of the simple fact that an intersecting subfamily of $\PN$ contains at most $2^{n-1}$ sets.
\begin{lemma}
\label{lemma:cross}
If $\A,\B \subset \PN$ are cross-intersecting families, then
\[\mu_p(\A) + \mu_{1-p}(\B) \le 1\]
for any $0 \le p \le 1$.
\end{lemma}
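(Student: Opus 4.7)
The plan is to prove this via a simple coupling argument: I construct a single random set whose distribution realises $\mu_p$, while its complement simultaneously realises $\mu_{1-p}$, and then use the cross-intersecting hypothesis to show that the events ``this set lies in $\A$'' and ``its complement lies in $\B$'' are disjoint.

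Concretely, let $S \subset [n]$ be the random set obtained by including each element $i \in [n]$ independently with probability $p$. Then $S$ has distribution $\mu_p$, and since each element $i$ lies in $[n] \setminus S$ independently with probability $1-p$, the complement $[n] \setminus S$ has distribution $\mu_{1-p}$. In particular,
\[
\Pr(S \in \A) = \mu_p(\A), \qquad \Pr([n] \setminus S \in \B) = \mu_{1-p}(\B).
\]

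The key observation is that $S$ and $[n]\setminus S$ are disjoint by construction. Hence if the event $\{S \in \A\}$ were to occur simultaneously with $\{[n]\setminus S \in \B\}$, we would have two disjoint sets $S \in \A$ and $[n]\setminus S \in \B$, contradicting the assumption that $\A$ and $\B$ are cross-intersecting. Thus these two events are disjoint subsets of the underlying probability space, so the sum of their probabilities is at most $1$, yielding
\[
\mu_p(\A) + \mu_{1-p}(\B) = \Pr(S \in \A) + \Pr([n]\setminus S \in \B) \le 1.
\]

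There is no real obstacle here: the argument is entirely the coupling trick, and no appeal to monotonicity, compressions, or FKG is needed. The only thing to verify carefully is that the complement of a $\mu_p$-random set really does have distribution $\mu_{1-p}$, which is immediate from independence of the coordinates.
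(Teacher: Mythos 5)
Your proof is correct and is essentially the same argument as the paper's, just phrased probabilistically: the paper observes that $\A$ and $\overline{\B} = \{[n]\setminus x : x \in \B\}$ are disjoint subfamilies of $\PN$ and that $\mu_p(\overline{\B}) = \mu_{1-p}(\B)$, which is exactly your coupling of $S$ with its complement and the disjointness of the two events.
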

\begin{proof}
Since $\A$ and $\B$ are cross-intersecting, it is clear that $\A \subset \PN \setminus \overline{\B}$, where $\overline{\B} = \{[n] \setminus x : x \in \B\}$. Therefore,
\[\mu_p(\A) \le \mu_p(\PN \setminus \overline{\B}) = 1 - \mu_{p}(\overline{\B}) = 1-\mu_{1-p}(\B). \qedhere\]
\end{proof}

For a family $\A \subset \PN$, we write $\I(\A) = \{x\cap y: x, y \in \A\}$ for the family of all possible intersections of pairs of sets from $\A$. We require the following easy lemma that relates the $(1/4)$-biased measure of $\I(\A)$ to the $(1/2)$-biased measure of $\A$.

\begin{lemma}
\label{lemma:int}
For any $\A \subset \PN$, if $\mu_{1/2}(\A) \ge \delta$, then $\mu_{1/4}(\I(\A)) \ge \delta^2$.
\end{lemma}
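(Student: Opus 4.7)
The plan is to exploit a natural probabilistic coupling between the $(1/2)$-biased and $(1/4)$-biased measures. Specifically, if $X$ and $Y$ are independent $(1/2)$-biased random subsets of $[n]$, then for each element $i \in [n]$ we have $\Pr[i \in X \cap Y] = \Pr[i \in X]\Pr[i \in Y] = 1/4$, and membership of distinct elements remains independent; hence $X \cap Y$ is distributed according to $\mu_{1/4}$. This identifies $\mu_{1/4}(\I(\A))$ with the probability that $X \cap Y$ lies in $\I(\A)$.

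Given this coupling, the inequality is immediate. Whenever $X \in \A$ and $Y \in \A$, the set $X \cap Y$ lies in $\I(\A)$ by definition of $\I(\A)$. Therefore
\[
\mu_{1/4}(\I(\A)) \;=\; \Pr[X \cap Y \in \I(\A)] \;\ge\; \Pr[X \in \A \text{ and } Y \in \A] \;=\; \mu_{1/2}(\A)^2 \;\ge\; \delta^2,
\]
where the penultimate equality uses independence of $X$ and $Y$.

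There is really no obstacle here beyond spotting the coupling; once one views $\mu_{1/4}$ as the law of an intersection of two independent $\mu_{1/2}$-random sets, the bound falls out of independence and the definition of $\I(\A)$. The only thing worth double-checking is that the conditioning on $X \in \A$, $Y \in \A$ is genuinely sufficient (rather than necessary) to place $X \cap Y$ in $\I(\A)$, which is clear from how $\I(\A)$ is defined.
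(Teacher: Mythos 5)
Your proof is correct and takes essentially the same approach as the paper: the probabilistic coupling you identify --- that the intersection of two independent $\mu_{1/2}$-random sets is $\mu_{1/4}$-distributed --- is precisely what the paper's explicit counting argument (noting that each $j$-set is the intersection of exactly $3^{n-j}$ ordered pairs, then summing) verifies by hand. Your probabilistic phrasing is a cleaner presentation of the same underlying computation.
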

\begin{proof}
Let $F$ be the map from $\PN \times \PN$ to $\PN$ defined by $F(x,y) = x \cap y$. For $j \in \{0, 1, \dots, n\}$, write $[n]^{(j)}$ for the family of all $j$-element subsets of the set $[n]$ and note that a fixed set $z \in [n]^{(j)}$ is the image under $F$ of exactly $3^{n-j}$ ordered pairs $(x,y) \in \PN \times \PN$. Consequently, writing $N_j = |F(\A \times \A) \cap [n]^{(j)}|$, we have
\[\sum_{j=0}^{n}  3^{n-j} N_j \ge |\A|^2 \ge \delta^2 2^{2n}.\]
It follows that \begin{align*}
\mu_{1/4}(\I(\A)) &= \mu_{1/4}(F(\A \times \A)) = \sum_{j=0}^{n} \left(\frac{1}{4}\right)^{j} \left(\frac{3}{4}\right)^{n-j} N_j \\
&= 2^{-2n} \sum_{j=0}^{n} 3^{n-j} N_j \ge \delta^2. \qedhere
\end{align*}
\end{proof}

We say that a family $\A \subset \PN$ is \emph{increasing} if it is closed under taking supersets, i.e., if $x \in \A$ and $x \subset y$, then $y \in \A$. It is easy to see that if $\A \subset \PN$ is increasing, then $\mu_p(\A)$ is a monotone non-decreasing function of $p$. Our main tool is the following well-known `sharp threshold' result of Friedgut and Kalai from~\citep{fk}, proved using Russo's Lemma~\citep{russo} and the so-called `BKKKL' theorem of Bourgain, Kahn, Kalai, Katznelson and Linial~\citep{bkkkl} on the influences of Boolean functions on product spaces.
\begin{proposition}
\label{prop:fk}
There exists a universal constant $c_0>0$ such that the following holds for all $n \in \N$. Let $0 < p,\eps < 1$ and let $\A \subset \PN$ be a symmetric increasing family. If $\mu_p(\A)> \eps$, then $\mu_q(\A) > 1-\eps$, where
\[q = \min\left\{1,p + c_0\left(\frac{\log(1/2\eps)}{ \log n}\right)\right\}.\eqno\qed\]
\end{proposition}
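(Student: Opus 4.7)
The plan is to prove Proposition~\ref{prop:fk} by combining Russo's lemma with the BKKKL influence inequality, using the transitivity of $\Aut(\A)$ to equalise the influences of all coordinates and thereby turn a pointwise influence bound into a global statement about $\frac{d}{dp}\mu_p(\A)$.

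Recall Russo's lemma: for an increasing family $\A \subset \PN$, the function $p \mapsto \mu_p(\A)$ is a polynomial in $p$ and satisfies
\[\frac{d}{dp}\mu_p(\A) = \sum_{i=1}^n I_i^{(p)}(\A),\]
where the $p$-biased influence $I_i^{(p)}(\A)$ is the $\mu_p$-probability that toggling coordinate $i$ changes whether a set lies in $\A$. Since $\mu_p$ is permutation-invariant and $\Aut(\A)$ acts transitively on $[n]$, the quantities $I_i^{(p)}(\A)$ are all equal to some common value $I^{(p)}(\A)$. The BKKKL theorem, applied to the indicator function of $\A$ on the $p$-biased cube, asserts that
\[\max_{1 \le i \le n} I_i^{(p)}(\A) \ge c \cdot \mu_p(\A)(1-\mu_p(\A)) \cdot \frac{\log n}{n}\]
for some universal $c > 0$. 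Combining these ingredients yields the differential inequality
\[\frac{d}{dp}\mu_p(\A) \ge c \cdot \mu_p(\A)(1-\mu_p(\A)) \cdot \log n.\]

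To finish, set $g(p) = \log(\mu_p(\A)/(1-\mu_p(\A)))$, which converts the differential inequality into $g'(p) \ge c \log n$; integrating from $p$ to $q$ gives $g(q) - g(p) \ge c(q-p)\log n$. The hypothesis $\mu_p(\A) > \eps$ forces $g(p) > \log(\eps/(1-\eps))$, and the desired conclusion $\mu_q(\A) > 1-\eps$ (equivalently $g(q) > \log((1-\eps)/\eps)$) follows once $c(q-p)\log n \ge 2\log((1-\eps)/\eps)$. This is satisfied as soon as $q - p \ge c_0 \log(1/(2\eps))/\log n$ for a sufficiently large universal $c_0$; if the prescribed $q$ would exceed $1$, the monotonicity of $\mu_p(\A)$ together with $\mu_1(\A) = 1$ (since $\A$ is nonempty and increasing) trivialises the conclusion at $q = 1$.

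The principal obstacle is applying the BKKKL theorem cleanly in the $p$-biased setting: the classical statement is for the uniform measure on $\{0,1\}^n$, and the biased extension carries a correction that becomes significant when $p(1-p)$ is very small, costing up to a factor of order $1/\log(1/(p(1-p)))$ in the maximum-influence bound. I would check that this correction is harmless: across the range of $p$ for which $\mu_p(\A) \in (\eps, 1-\eps)$ it degrades the constant in the differential inequality only by a bounded multiplicative factor, and that loss can be absorbed into the universal constant $c_0$.
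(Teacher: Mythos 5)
The paper does not actually prove this proposition; it quotes it from Friedgut and Kalai, whose proof is exactly the one you outline (Russo's lemma, transitivity of $\Aut(\A)$ to equalise the influences, and the BKKKL influence bound, followed by integrating the logit of $\mu_p(\A)$), and your execution of that argument is correct. The only comment worth making is that your closing concern is unnecessary: the BKKKL theorem is stated for arbitrary product measures on $[0,1]^n$ with a constant independent of the measure, and under the standard embedding $x_i \mapsto \mathbf{1}[x_i \le p]$ its notion of influence coincides with the $p$-biased pivotal probability, so no $p$-dependent correction arises --- this is precisely why Friedgut and Kalai invoke BKKKL rather than a $p$-biased form of the Kahn--Kalai--Linial theorem.
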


We are now ready to prove our result.

\begin{proof}[Proof of Theorem~\ref{thm:main}.]
Let $\A \subset \PN$ be a symmetric $3$-wise intersecting family. Observe that the family $\{ y : x \subset y \text{ for some } x \in \A \}$ is also symmetric and $3$-wise intersecting. Therefore, by adding sets to $\A$ if necessary, we may assume that $\A$ is increasing. 

Let $\mu_{1/2}(\A) = \delta$ and note that $\delta \le 1/2$ since $\A$ is intersecting. We may also assume that $\delta >0$ since the result is trivial if $\A$ is empty.

Since $\mu_{1/2}(\A) = \delta > \delta^2$, we may apply Proposition~\ref{prop:fk} with $p = 1/2$ and $\eps = \delta^2$ to conclude that $\mu_{q}(\A) > 1-\delta^2$, where
\[q = \min\left\{1,\frac{1}{2} + c_0\left(\frac{\log(1/2\delta^2)}{ \log n}\right)\right\}.\]
By Lemma~\ref{lemma:int}, we also have $\mu_{1/4}(\I(\A)) \ge \delta^2$. Since $\A$ is $3$-wise intersecting, it follows that $\A$ and $\I(\A)$ are cross-intersecting. Hence, by Lemma~\ref{lemma:cross}, we have $\mu_{3/4}(\A) \le 1-\delta^2$.

Now, as $\mu_{q}(\A) > 1-\delta^2$ and $\mu_{3/4}(\A) \le 1-\delta^2$, it follows from the fact that $\A$ is increasing  that $q > 3/4$. Consequently, we have
\[c_0\left(\frac{\log(1/2\delta^2)} { \log n}\right) > \frac{1}{4}.\]
It is now easy to check that $\delta < n^{-1/(8c_0)}$, proving the theorem.
\end{proof}

\section{Conclusion}\label{sec:conc}
We suspect that Theorem~\ref{thm:main} is far from best-possible. Cameron, Frankl and Kantor~\citep{frankl-2} showed that a symmetric $4$-wise intersecting subfamily of $\PN$ has size at most $2^n \exp(-Cn^{1/3})$, where $C = (\log 2/2)^{1/3}$. We believe a similar result should also hold for symmetric $3$-wise intersecting families and conjecture the following strengthening of Theorem~\ref{thm:main}.

\begin{conjecture}\label{conj:best}
If $\A \subset \PN$ is a symmetric $3$-wise intersecting family, then 
\[\log_2 |\A| \le n - cn^{\delta},\]
where $c,\delta>0$ are universal constants. 
\end{conjecture}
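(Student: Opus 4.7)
The plan is to push through the argument of Theorem~\ref{thm:main} but replace Proposition~\ref{prop:fk} with a sharper, polynomial-strength threshold result. In the proof of Theorem~\ref{thm:main}, the bound $\delta \leq n^{-\Omega(1)}$ arises because Friedgut--Kalai gives a threshold window of width $O(\log(1/\eps)/\log n)$, and this window must exceed the constant gap $1/4$ between the points where $\mu_p(\A) \geq \delta$ (at $p = 1/2$) and where $\mu_p(\A) \leq 1 - \delta^2$ (at $p = 3/4$). To obtain the conjectured saving of $n^{\delta}$ instead, one needs a sharp threshold result of the form: \emph{for any symmetric increasing $\F \subseteq \PN$ with $\mu_p(\F) > \eps$, one has $\mu_{p+w}(\F) > 1 - \eps$, where $w = O(\log(1/\eps)/n^{\delta})$}. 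Equivalently, any transitive increasing family with $p$-biased measure bounded away from $0$ and $1$ should have total influence at least $n^{\delta}$.

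Granted such a result, the proof is a routine adaptation of Theorem~\ref{thm:main}. First, as in the original argument, I would reduce to the case that $\A$ is increasing, set $\delta = \mu_{1/2}(\A)$, apply Lemma~\ref{lemma:int} to produce $\mu_{1/4}(\I(\A)) \geq \delta^2$, and invoke Lemma~\ref{lemma:cross} together with the cross-intersection $\A \perp \I(\A)$ (a direct consequence of the 3-wise intersection property) to obtain $\mu_{3/4}(\A) \leq 1 - \delta^2$. Applying the hypothetical polynomial-strength threshold to $\A$ with $p = 1/2$ and $\eps = \delta^2$ then gives $\mu_{q}(\A) > 1 - \delta^2$ for some $q = 1/2 + O(\log(1/\delta^2)/n^{\delta})$. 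Combining this with $\mu_{3/4}(\A) \leq 1 - \delta^2$ and the monotonicity of $\mu_p(\A)$ forces $q > 3/4$, which in turn forces $\log(1/\delta) \gtrsim n^{\delta}$ and hence $\log_2|\A| \leq n - c n^{\delta}$.

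The main obstacle is, of course, establishing the required polynomial-strength sharp threshold. The best general bound currently available for transitive Boolean functions is that of Bourgain and Kalai, which gives a threshold window of width $(\log n)^{-(2-o(1))}$; this would improve Theorem~\ref{thm:main} to a $(\log n)^{2-o(1)}$ saving in the exponent, still well short of the conjectured polynomial bound. To make real progress one would likely need either a new sharp-threshold theorem that exploits features beyond mere transitivity of $\Aut(\A)$, or a direct argument that uses additional structure of $\A$ itself: for example, since $\I(\A)$ is also symmetric and increasing and contains $\A$, one might hope to bootstrap by iterating threshold information between $\A$ and $\I(\A)$, ideally leveraging the measure-boost from Lemma~\ref{lemma:int} at every step. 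A Fourier-analytic approach -- exploiting the cubic identity $\mathbf{1}_\A(x)\mathbf{1}_\A(y)\mathbf{1}_\A(z) = 0$ whenever $x \cap y \cap z = \emptyset$, combined with hypercontractivity -- may offer an alternative route that bypasses the transitive sharp-threshold barrier altogether.
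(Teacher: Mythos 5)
The statement you are addressing is Conjecture~\ref{conj:best}, which the paper explicitly leaves open; there is no proof in the paper to compare against, and what you have written is, as you yourself acknowledge, a conditional reduction rather than a proof. The reduction itself is sound and is exactly the scheme of Theorem~\ref{thm:main}: if one had a sharp-threshold theorem with window $O(\log(1/\eps)/n^{\delta})$ in place of Proposition~\ref{prop:fk}, then the chain $\mu_{1/2}(\A)=\delta$, $\mu_{1/4}(\I(\A))\ge\delta^2$ (Lemma~\ref{lemma:int}), $\mu_{3/4}(\A)\le 1-\delta^2$ (Lemma~\ref{lemma:cross}), $q>3/4$ would indeed force $\log(1/\delta)\gtrsim n^{\delta}$ and hence the conjectured bound.

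The genuine gap is that the hypothetical ingredient, as you state it, is not merely unproven but false. The tribes family --- partition $[n]$ into blocks of size roughly $\log_2 n-\log_2\log_2 n$ and take all sets containing at least one full block --- is increasing and symmetric (its automorphism group contains the transitive wreath product of within-block and between-block permutations), has $\mu_{1/2}$ bounded away from $0$ and $1$ for a suitable block size, and has total influence $\Theta(\log n)$ at $p=1/2$; by Russo's lemma its threshold window therefore has width $\Theta(1/\log n)$. So the assertion that every transitive increasing family at criticality has total influence at least $n^{\delta}$ fails, and Proposition~\ref{prop:fk} is essentially tight for the class of all symmetric increasing families. (The Bourgain--Kalai improvement to a window of width $(\log n)^{-(2-o(1))}$ that you cite requires a symmetry group substantially richer than a transitive one, such as the induced action of $S_v$ on graph properties; it does not apply to general transitive families, again because of tribes-type examples.) Consequently, any proof of Conjecture~\ref{conj:best} must exploit the $3$-wise intersecting structure itself --- for instance the interaction between $\A$ and $\I(\A)$, or the cubic identity you mention --- and not just the transitivity of $\Aut(\A)$. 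Your closing paragraph correctly identifies this, but it should be the starting point of the plan rather than a caveat: the route via a better sharp-threshold theorem for arbitrary transitive families is provably closed.
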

This would be best-possible up to the values of $c$ and $\delta$, as evidenced by the following construction communicated to us by Oliver Riordan. Let $k$ be an odd integer and let $n=k^2$, partition $[n]$ into $k$ `blocks' $B_1, B_2, \ldots,B_k$ each of size $k$, and take $\A \subset \PN$ to be the family of all those subsets of $[n]$ that contain more than half the elements in each block and all the elements in some block; in other words, 
\[\A = \{x \subset [n]: (\forall\, i \in [k] : |x \cap B_i| > k/2) \wedge (\exists \,j\in[k] : B_j \subset x)\}.\]
It is easy to see that $\A$ is symmetric and $3$-wise intersecting, and that
\[\log_2 |\A| = {n - 2n^{1/2} + o\left(n^{1/2}\right)}.\]

It is straightforward to generalise the construction described above to show that, for any $r \geq 3$, there exists a symmetric $r$-wise intersecting family $\A \subset \PN$ with
\[\log_2 |\A| = n - (r-1)n^{(r-2)/(r-1)} + o\left(n^{(r-1)/r}\right),\]
for infinitely many $n \in \N$. Let $k$ be an odd integer and let $n = k^{r-1}$. Now, consider a $k$-ary tree $T$ of depth $r-1$, so that $T$ has $(k^r-1)/(k-1)$ nodes in total (with $k^i$ nodes at level $i$ for each $i \in \{0,1,\dots,r-1\}$). A node at level $r-1$ is called a \emph{leaf}, and the set of leaves of $T$ is denoted by $\mathcal{L}(T)$. Identify the ground-set $[n]$ with the set of leaves $\mathcal{L}(T)$ and take $\A$ to be the family of sets $x\subset \mathcal{L}(T)$ such that
\begin{enumerate}
\item $x$ contains more than half the leaf-children of each node at level $r-2$, and 
\item for each $l \in \{0,1,2,\dots,r-3\}$ and for each node $v$ at level $l$, there exists a child $w$ of $v$ such that $x$ contains all the leaf-descendants of $w$. 
\end{enumerate}
It is easy to see that $\log_2 |\mathcal{A}| = n - (r-1)n^{(r-2)/(r-1)} + o(n^{(r-1)/r})$, and that $\A$ is symmetric and $r$-wise intersecting.

Let us also mention the following elegant projective-geometric construction related to us by Sean Eberhard (that produces slightly smaller families). Let $q$ be a prime power, and let $\mathbb{P}^r(\mathbb{F}_q)$ denote the $r$-dimensional projective space over the field $\mathbb{F}_q$. Now, take $\A$ to be the family of all subsets of $\mathbb{P}^r(\mathbb{F}_q)$ that contain an $(r-1)$-dimensional projective subspace. Clearly, $\mathcal{A}$ is symmetric and $r$-wise intersecting. 
The number of $(r-1)$-dimensional projective subspaces of $\mathbb{P}^r(\mathbb{F}_q)$ is $(q^{r+1}-1)/(q-1)$, and each such subspace has
cardinality $(q^r-1)/(q-1)$. Hence, writing $n = |\mathbb{P}^r(\mathbb{F}_q)| = (q^{r+1}-1)/(q-1)$, we have
\[2^{n - (q^r-1)/(q-1)} \leq |\A| \leq \left(\frac{q^{r+1}-1}{q-1}\right)2^{n - (q^r-1)/(q-1)},\]
so $\log_2 |\A| = n - n^{(r-1)/r} + o(n^{(r-1)/r})$.

Finally, it would be very interesting to determine more precisely, for each $r \geq 3$, the asymptotic behaviour of the function 
\[f_r(n) = \max\{|\A|:\A \subset \PN \text{ such that } \A \text{ is symmetric and }r\text{-wise intersecting}\}.\]

\section*{Acknowledgements}
We would like to thank Sean Eberhard and Oliver Riordan for relating the constructions described above to us. These constructions improved the power of $n$ in our original construction (from $n^{\log r / \log (r+1)}$ to $n^{(r-1)/r}$ initially, and to $n^{(r-2)/(r-1)}$ subsequently).

\bibliographystyle{amsplain}
\bibliography{symmetric_3_families}

\end{document}